\newtheorem{theorem}{Theorem}
\newtheorem{lemma}{Lemma}
\newtheorem{remark}{Remark}
\newcommand{\be}{\begin{equation}}
\newcommand{\ee}{\end{equation}}
\newcommand{\bea}{\begin{eqnarray}}
\newcommand{\eea}{\end{eqnarray}}
\newcommand{\barr}{\begin{array}}
\newcommand{\earr}{\end{array}}
\newcommand{\bpar}{\begin{equation} \left\{ \begin{array}{lll}}
\newcommand{\epar}{ \end{array}\right. \end{equation} }
\newcommand{\eparn}{ \end{array} \right.}
\newcommand\leftmat{\left(\begin{array}{cc}}
\newcommand\rightmat{\end{array}\right)}
\newcommand\leftvec{\left(\begin{array}{c}}
\newcommand\rightvec{\end{array}\right)}
\title[The phenomenon of revivals]{The phenomenon of revivals on complex potential Schr{\"o}dinger's equation}
\author{
L. Boulton$^\beta$, G. Farmakis$^\varphi$,  B. Pelloni$^\pi$}
\address{
Heriot-Watt University \& Maxwell Institute  for the Mathematical Sciences, Edinburgh, Scotland.
}
\email{$^\beta$L.Boulton@hw.ac.uk, $^\varphi$G.Farmakis@hw.ac.uk, $^\pi$B.Pelloni@hw.ac.uk}
\date{3rd April 2024}
\begin{document}
\maketitle

\begin{abstract}
The mysterious phenomena of revivals in linear dispersive periodic equations was discovered first experimentally in optics in the 19th century, then rediscovered several times by theoretical and experimental investigations. While the term has been used systematically and consistently by many authors, there is no consensus on a rigorous definition.  In this paper, we describe revivals modulo a regularity condition in a large class of Schr{\"o}dinger's equations with complex bounded potentials. As we show, at rational times the solution is given explicitly by finite linear combinations of translations and dilations of the initial datum, plus an additional continuous term.
\end{abstract}

\

\medskip

\medskip

\newpage

\section{Introduction}

Recently there have been significant developments in the study of {revivals} in dispersive evolution equations \cite{smith2020revival}.  These phenomena, which are also called dispersive quantisations or Talbot effects, describe a surprising dichotomy in the pointwise behaviour of the solution of time-evolution equations at specific values of the time variable, the so-called {rational times},  compared to all other generic times. At rational times, the solution revives the shape of the initial datum by finite superpositions, reflections and re-scalings, with a prescribed simple combinatorial rule. See \cite{erdougan2016dispersive} and references therein. 

The majority of past investigations about revivals involve equations and boundary conditions with the property that the eigenpairs of the spatial operator can be found explicitly and satisfy precise conditions of modularity and periodicity. The prime example of this is the case of  linear dispersive equations with constant coefficients and periodic boundary conditions. In such cases, the techniques for detecting the times at which the revivals appear exploit the specific periodic matching of the eigenvalues, eigenfunctions and boundary conditions. With the help of summations of Gauss type, the infinite series representation of the solution then reduces to a finite sum, characterising the revivals explicitly. Naturally, the direct applicability of this approach is limited.

The purpose of this paper is to take a different point of view, by formulating the revivals phenomena in terms of perturbation theory. 
Concretely, we ask the question of whether a (large) class of equations exhibit them, modulo a regular perturbation. Earlier version of this concept can be found in the works \cite{rodnianski1999continued,cho2021talbot, erdogan2013talbotpaper,erdougan2013global}, about which we give details below.

\smallskip

We consider the class of linear Schr\"odinger equations 
\begin{align}\label{pbm1}
  & \partial_t u(x,t)=-i(-\partial_x^2+V(x))u(x,t), &&x\in(0,\pi), \;t>0,\nonumber \\
&u(0,t)=u(\pi,t)=0, && t>0, \\
   & u(x,0)=f(x), && x\in(0,\pi),\nonumber
\end{align}
with a complex-valued potential $V$, subject to Dirichlet boundary conditions, given an initial wavefunction $f\in L^2(0,\pi)$. Our goal is to detect revivals by perturbation from the case $V=0$. As we shall see below, in the large wavenumber asymptotic regime and for small enough $V$, the simple but non-trivial structure of \eqref{pbm1} supports the combinatorial argument, involving the Gauss summations, that is valid for the free-space equation.

Our contribution is summarised in the next theorem. It shows that the solution of \eqref{pbm1} at rational times support revivals modulo a continuous term. This result matches a similar earlier finding, reported in \cite{BFP}. Indeed, for $V=0$ and boundary conditions of the type $bu(0,t) = (1-b) \partial_{x} u(\pi,t)$ where $b\in (0,1)$ is a parameter, an analogous conclusion holds true.

\smallskip

Here and everywhere below, $f^{\mathrm{o}}$ denotes the odd, $2\pi$-periodic extension of the function $f$ and $\langle V \rangle=\frac{1}{\pi}\int_0^{\pi}V(x)\mathrm{d}x$ the mean of the potential function.

\begin{theorem} \label{theorem2}
Let $V\in H^2(0,\pi)$ with $\|V\|_{\infty}<\frac32$.
Then, for $p,q\in\mathbb{N}$ co-prime numbers, the solution $u(x,t)$ to \eqref{pbm1} at time $t=2\pi\frac{p}{q}$ is given by
\[
     u\Big(x,2\pi\frac{p}{q}\Big)=w\Big(x,2\pi\frac{p}{q}\Big)+\frac{1}{q} \ e^{-2\pi i\langle V\rangle \frac{p}{q}}
      \sum_{k,m=0}^{q-1} e^{ 2\pi i ( m\frac{k}{q}-m^2 \frac{p}{q} )} f^{\mathrm{o}}\Big(x-2\pi\frac{k}{q}\Big)
\] 
where $w(\cdot,t)\in \operatorname{C}(0,\pi)$ for all fixed $t>0$. 
\end{theorem}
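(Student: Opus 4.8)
The plan is to diagonalise the spatial operator $L_V = -\partial_x^2 + V$ with Dirichlet boundary conditions, write the solution of \eqref{pbm1} as the spectral series $u(x,t) = \sum_{n\ge 1} e^{-i\lambda_n t}\langle f,\psi_n\rangle \psi_n(x)$ over the eigenpairs $(\lambda_n,\psi_n)$, and then isolate the part of this series that behaves, at rational times, exactly as it would in the free case $V=0$. The smallness hypothesis $\|V\|_\infty < \tfrac32$ together with $V\in H^2$ should force the eigenvalues to lie in a neighbourhood of $n^2$; more precisely, I expect an asymptotic expansion of the form $\lambda_n = n^2 + \langle V\rangle + r_n$ with $r_n = O(n^{-2})$, and a corresponding estimate $\psi_n = \sqrt{2/\pi}\,\sin(nx) + \phi_n$ with $\|\phi_n\|$ summably small in $n$. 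The $H^2$ regularity of $V$ is what drives the $O(n^{-2})$ decay of the remainder (two integrations by parts against $\sin(nx)$), and hence the absolute convergence of the error series; the bound $3/2$ is presumably what guarantees (via the min–max principle and the gap $\lambda_{n+1}-\lambda_n \approx 2n+1 \ge 3$) that no eigenvalues collide or stray, so the perturbation is genuinely regular.

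Next I would split $u(x,2\pi p/q)$ accordingly. Writing $e^{-i\lambda_n 2\pi p/q} = e^{-2\pi i \langle V\rangle p/q}\, e^{-2\pi i n^2 p/q}\, e^{-2\pi i r_n p/q}$ and $e^{-2\pi i r_n p/q} = 1 + (e^{-2\pi i r_n p/q}-1)$, and similarly peeling the $\phi_n$ corrections off $\psi_n$ and the correction off $\langle f,\psi_n\rangle$, the solution decomposes as a "free-like" main term
\[
  e^{-2\pi i \langle V\rangle \frac{p}{q}} \frac{2}{\pi}\sum_{n\ge 1} e^{-2\pi i n^2 \frac{p}{q}} \langle f,\sin(n\cdot)\rangle \sin(nx)
\]
plus several remainder series. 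Each remainder series has a factor that is $O(n^{-2})$ (coming from $e^{-2\pi i r_n p/q}-1 = O(r_n) = O(n^{-2})$, or from $\|\phi_n\|$, or from the Fourier-coefficient correction), so by Cauchy–Schwarz against $\{\langle f,\sin(n\cdot)\rangle\}\in\ell^2$ the remainder converges absolutely and uniformly in $x$; hence it defines a continuous function of $x$, which I collect into $w(x,2\pi p/q)$. This is the step where the hypotheses are really spent.

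The main term is then handled by the classical Talbot/Gauss-sum computation for the free Schrödinger equation on the interval with Dirichlet data. Using that $\sum_{n\ge1}\langle f,\sin(n\cdot)\rangle\sin(nx)$ is, up to normalisation, the Fourier sine series of $f$, i.e. the Fourier series of $f^{\mathrm o}$, one writes the main term as the periodised action of the multiplier $n\mapsto e^{-2\pi i n^2 p/q}$ on $f^{\mathrm o}$. Since $n\mapsto e^{-2\pi i n^2 p/q}$ depends only on $n \bmod q$, it is a finite linear combination of the characters $n\mapsto e^{2\pi i nk/q}$, with coefficients given by the normalised Gauss sums $\tfrac1q\sum_{m=0}^{q-1} e^{2\pi i(mk/q - m^2 p/q)}$; translating each character back on the Fourier side turns it into the shift $f^{\mathrm o}(x - 2\pi k/q)$, producing exactly the double sum in the statement. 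I would note that the oddness and $2\pi$-periodicity of $f^{\mathrm o}$ are what make these shifts well defined and consistent with the Dirichlet condition.

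The main obstacle I anticipate is not the Gauss-sum bookkeeping — that is routine once the decomposition is in place — but rather establishing the perturbative eigenvalue/eigenfunction asymptotics \emph{with a genuinely $n$-summable remainder and uniformity in $x$}, for a \emph{complex} (non-self-adjoint) potential where one cannot lean on spectral theorems or variational principles directly. One must control the resolvent of $-\partial_x^2$ (Dirichlet) near each $n^2$, show the perturbed operator still has simple eigenvalues there with a Riesz projection of norm close to $1$, and extract the $O(n^{-2})$ remainder uniformly; the condition $\|V\|_\infty<\tfrac32$ is exactly calibrated so the Neumann series for $(L_V-z)^{-1}$ on the appropriate contours converges. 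Getting the error series to be not merely $\ell^2$ but absolutely and uniformly convergent in $x$ (so that $w(\cdot,t)$ is truly continuous, as claimed) will require the extra decay that $V\in H^2$ buys, and handling the perturbation of the (possibly non-orthogonal) eigenbasis carefully enough that the biorthogonal expansion of $f$ still converges.
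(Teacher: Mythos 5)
Your proposal follows essentially the same route as the paper: eigenpair asymptotics for the perturbed Dirichlet problem (which the paper imports from Marchenko's classical formulas and Kato's Riesz-basis criterion, rather than building them from a resolvent/Neumann-series construction as you suggest), a splitting of the biorthogonal spectral series into the free evolution plus remainder series controlled by Cauchy--Schwarz against the $\ell^2$ coefficients, and the standard Gauss-sum computation for the main term. One correction: the leading eigenfunction error is $-\cos(nx)V_1(x)/(2n)$ with $V_1(x)=\int_0^x V(s)\,\mathrm{d}s$, i.e.\ only $O(n^{-1})$ in sup norm rather than the $O(n^{-2})$ you claim; your Cauchy--Schwarz step still closes because $\{1/n\}\in\ell^2$, and this weaker decay is precisely why the paper obtains mere continuity (not $C^1$) for the corresponding pieces of $w$.
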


In case $V$ is real-valued, the same conclusion holds with the weaker assumptions $V\in \operatorname{BV}(0,\pi)$ and $\|V\|_{\infty}<\infty$. In Section~\ref{optimality} we investigate whether the bound on $V$ is necessary in the complex case. 

We interpret the conclusion of Theorem~\ref{theorem2} by saying that \eqref{pbm1} supports a weak form of revival. Note that the result does not depend on the orthogonality of the family of eigenfunctions. Our choice of Dirichlet boundary conditions, corresponding to potential barriers at both ends of the finite segment, exhibits all the features of our methodology but is free from unnecessary complications. Indeed, while Neumann and other separated boundary conditions can be treated via a similar approach, in these cases multiplicities and the possibility of eigenvalues which are not semi-simple lead to additional technical distractions.

\medskip

Phenomena similar to the one described here had already been observed. The investigations conducted in \cite{rodnianski1999continued} and \cite{cho2021talbot} lead to versions of Theorem~\ref{theorem2} for periodic $V$ and periodic boundary conditions. The method of proof in these works is different from the one below. It relies on Duhamel's representation and an analysis  of the solution in periodic Besov spaces. In a separate development, for the periodic cubic non-linear Schr\"{o}dinger equation \cite{erdogan2013talbotpaper} and the Korteweg-de Vries equation \cite{erdougan2013global}, it was proved  that at all times the difference between the solution and the linear time-evolution is more regular than the initial datum. This directly implies the appearence of weak revivals at rational times also for these two non-linear equations. Numerical evidence of this effect in the non-linear setting was reported in \cite{chen2013dispersion, chen2014numerical}.    Our findings complement all these investigations.

The proof of Theorem~\ref{theorem2} that we present shows how to apply directly classical perturbation expansions in order to derive existence of revivals. This approach has two main implications worth mentioning. On the one hand, Theorem~\ref{theorem2} confirms the conjecture that the revival effect is prevalent in a large class of quantum systems with discrete spectra, when the eigenpairs asymptotic supports it  \cite[Section 6.2, page 116]{Brandt_2012}, irrespective of whether the underlying operator is self-adjoint. On the other hand, the present approach might provide a rigorous foundation for tackling the general conjecture formulated in \cite[page 12-13]{chen2013dispersion}. The latter states that a linear PDE with  a dispersion relation that is asymptotic to a polynomial with integer coefficients, in the large wave-numbers regime, should support a type of revival. Further numerical evidence strengthening the validity of this conjecture can be found in \cite{olver2018revivals} for the case $V=0$ and various classes of boundary conditions.  

\medskip

The structure of the paper is as follows. In Section~\ref{section2} we lay down the precise eigenpairs asymptotics, in terms of $V$, that allow the validity of Theorem~\ref{theorem2}. All the results that we present in that section are classical, but we include crucial details of their proofs. Section~\ref{section3}  is devoted to the proof of Theorem~\ref{theorem2}. In the final Section~\ref{section4}, we illustrate our main results by means of examples involving complex Mathieu potentials and discuss the optimality of the different assumptions on $V$.  

\section{Spectral asymptotics} \label{section2}

Let $V\in H^2(0,\pi)$ be a complex-valued potential function. Denote the Hamiltonian associated to \eqref{pbm1} by
\[
L=-\partial_x^2+V:H^2(0,\pi)\cap H^1_0(0,\pi)\longrightarrow L^2(0,\pi).
\] 
Since \[\|V\|_{\infty}=\max_{x\in[0,\pi]}|V(x)|<\infty,\] then the operator $L$ is closed in the domain above, it has a compact resolvent and its adjoint $L^*=-\partial_x^2+\overline{V}$ has the same domain. In some of the statements below we impose the extra condition $\|V\|_{\infty}<\frac32$ of the theorem. Moreover, without loss of generality, we assume in what follows that $\langle V\rangle=0$. 

The boundary value problem  \eqref{pbm1} can be written concisely as  
\begin{align} \label{sv_bc}
    &u_t=-iLu,  \\
   & u(\cdot,0)=f,\nonumber
\end{align}
for an initial value $f\in L^2(0,\pi)$. We know from the classical theory of perturbations of one-parameter semigroups that the operator $iL$ is the generator of a $C_0$ one-parameter semigroup so the equation has a unique solution in $L^2$ for any $f\in L^{2}$. In general, the spectrum of $L$ is not real. However, it is asymptotically close to the real line and the eigenfunctions are asymptotically close to trigonometric functions.  
Our objective in this section is to determine the leading order of these asymptotics and the precise decay rate of the corrections, for the appearence of weak revivals in \eqref{sv_bc}. We then give the proof of Theorem \ref{theorem2} in the next section. 

The next two lemmas are routine consequences of classical properties of non-self-adjoint Sturm-Liouville operators and their analytic perturbation theory, but we give full details of their validity as they are not standard. They imply that the operator $L$ has an infinite sequence of eigenpairs (eigenfunctions and eigenvalues) 
\[\{y_j,w_j^2\}_{j=1}^\infty\subset (H^2\cap H^1_0)\times \mathbb{C}\]
with an asymptotic structure close enough to that of the case $V=0$.

\begin{lemma} \label{eigenvalues_of_L}
Let $w_j^2$ be the eigenvalues of $L$. Then, for $j\to\infty$,
\[
       |w_j-j|=\frac{a_3}{j^3}+O(j^{-4})
\] 
where $a_3\in \mathbb{C}$ is a constant that only depends on $V$. Moreover, if $\|V\|_{\infty}<\frac32$, then each eigenvalue $w_j^2$ is simple.
\end{lemma}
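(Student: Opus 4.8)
The plan is to compare the eigenvalues of $L=-\partial_x^2+V$ with those of the free operator $-\partial_x^2$ on $(0,\pi)$ with Dirichlet conditions, whose eigenvalues are exactly $j^2$ with eigenfunctions $\sin(jx)$. I would fix $j$ large and study the Dirichlet eigenvalue problem via the standard shooting/transfer-matrix approach: let $\varphi(x,\mu)$ be the solution of $-\varphi''+V\varphi=\mu\varphi$ with $\varphi(0)=0$, $\varphi'(0)=1$, so that $\mu=w_j^2$ is an eigenvalue precisely when $\varphi(\pi,\mu)=0$. Writing $\mu=w^2$, the variation-of-constants (Volterra) representation gives
\[
\varphi(x,w)=\frac{\sin(wx)}{w}+\frac{1}{w}\int_0^x \sin\big(w(x-s)\big)V(s)\varphi(s,w)\,\mathrm{d}s,
\]
and iterating this Neumann series produces an expansion of $\varphi(\pi,w)$ in inverse powers of $w$, whose coefficients are explicit integrals of $V$ (and, after integrating by parts, of its derivatives — this is where $V\in H^2$ enters, ensuring the remainder is genuinely $O(w^{-4})$ rather than merely $O(w^{-3})$). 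Because $\langle V\rangle=0$ by the normalisation, the $O(w^{-2})$ term in $w\varphi(\pi,w)=\sin(w\pi)+O(w^{-1})$ built from $\int_0^\pi V$ drops out, which is exactly what pushes the correction down to the $w^{-3}$ scale.

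The next step is to locate the zeros. A Rouché argument on circles of radius $\tfrac12$ around $w=j$ (using $\|V\|_\infty<\infty$ to control the perturbation uniformly for large $j$) shows there is exactly one zero $w_j$ near $j$, and it is real-analytic — rather, analytic — in the entries of the expansion. Substituting the ansatz $w_j=j+\delta_j$ with $\delta_j\to 0$ into the expansion $\sin(w_j\pi)=-\,(\text{lower-order terms})$ and using $\sin(w_j\pi)=\sin(\pi\delta_j)\cos(\pi j)\cdot(\pm 1)=\pm(-1)^j\pi\delta_j+O(\delta_j^3)$, one solves for $\delta_j$ order by order: the leading balance gives $\delta_j=\tfrac{a_3}{j^3}+O(j^{-4})$ with $a_3$ a fixed constant determined by the quadratic-in-$V$ integral term of the Neumann series (the linear term having vanished through $\langle V\rangle=0$). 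Collecting the estimates yields $|w_j-j|=\tfrac{a_3}{j^3}+O(j^{-4})$. I would be slightly careful to phrase $a_3$ as "a constant depending only on $V$" since its sign and size are irrelevant for the revival argument.

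For simplicity of each eigenvalue under $\|V\|_\infty<\tfrac32$, the cleanest route is a perturbative comparison with $-\partial_x^2$ combined with the fact that $-\partial_x^2$ has spectral gaps $(j+1)^2-j^2=2j+1\geq 3$. One shows that any eigenvalue $w_j^2$ lies within distance $\|V\|_\infty<\tfrac32$ of the spectrum of $-\partial_x^2$ (e.g.\ by a numerical-range / resolvent bound: if $\mathrm{dist}(\mu,\{k^2\})>\|V\|_\infty$ then $\|(-\partial_x^2-\mu)^{-1}V\|<1$, so $\mu\notin\sigma(L)$), hence the discs $D(j^2,\tfrac32)$ are disjoint and each contains, by a Riesz-projection continuity/Rouché argument along the path $tV$, $t\in[0,1]$, exactly one eigenvalue of $L$ counted with algebraic multiplicity — and that multiplicity is $1$ because it is $1$ at $t=0$ and the total inside each disc is conserved. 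Since $w_j^2$ has algebraic multiplicity one, it is in particular geometrically simple.

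\textbf{Main obstacle.} The genuinely delicate point is not the existence of the asymptotic expansion — that is classical Sturm–Liouville bookkeeping — but extracting the correct \emph{order} of the correction: one must verify that the contribution at scale $w^{-2}$ vanishes identically (this uses $\langle V\rangle=0$) and that the $H^2$ regularity of $V$ is exactly enough to integrate by parts twice and bound the tail of the Neumann series by $O(w^{-4})$, so that the surviving term is precisely of order $j^{-3}$. Getting the integration-by-parts boundary terms and the dependence of $a_3$ on $V$ pinned down, while keeping all error estimates uniform in $j$, is the part that needs real care.
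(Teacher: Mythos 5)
Your argument is correct and follows essentially the same route as the paper: for the expansion the paper simply cites Marchenko's asymptotic formula (of which your Volterra/Neumann-series derivation, with $\langle V\rangle=0$ killing the $j^{-1}$ term, is the standard proof), and for simplicity it uses exactly your disc argument, phrased via Kato's holomorphic family of type (A) $\alpha\mapsto-\partial_x^2+\alpha V$ together with the spectral gaps $2j+1\geq 3$ beating $2\|V\|_\infty<3$. One minor quibble: $a_3$ also receives a linear-in-$V$ contribution from the boundary terms generated by integrating the first iterate by parts, not only from the quadratic term, but since only the order $j^{-3}$ matters for the lemma this is immaterial.
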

\begin{proof}[Proof]
By virtue of the classical Marchenko asymptotic formula \cite[Theorem~1.5.1]{Marchenko1986}, we know that
\[
     w_j=j+\frac{a_1}{2j}+\frac{\tilde{a}_3}{8j^3}+O(j^{-4}) \qquad k\to\infty
\] 
where $a_1=\langle V\rangle=0$. This gives the eigenvalue asymptotics. 

The family of operators $\alpha\longmapsto T_\alpha=-\partial_x^2+\alpha V$ on the domain $H^2\cap H^1_0$ is a holomorphic family of type (A) for $\alpha\in\mathbb{C}$; see \cite[Example~2.17, p.385]{Kato1980}. As the operator $T_0$ has a compact resolvent, then it follows that $T_\alpha$ have compact resolvent for all $\alpha\in\mathbb{C}$. 

Now, assume that $\|V\|_{\infty}<\frac32$. Then, for $|\alpha|\leq 1$, all the eigenvalues of the family $T_\alpha$ are simple, as they lie in the $\alpha\|V\|_{\infty}$-neighbourhood of $\{j^2\}_{j=1}^\infty$. In particular, for $\alpha=1$ all the eigenvalues of $L$ are indeed simple.
\end{proof}

We now show that the eigenfunctions of $L$ are close enough to the orthonormal Fourier-sine basis corresponding to $V=0$,
\begin{equation} \label{onb}
    d_j(x)=\sqrt{\frac{2}{\pi}}\sin(j\pi x), \quad j\in\mathbb{N}.
\end{equation}
From the second statement of the next lemma, it follows that the solution to \eqref{sv_bc} is given by
\[
   u(x,t)=\sum_{j=1}^\infty \langle f,y_j^*\rangle e^{-iw_{j}^{2} t} y_j(x)
\]
for $y_j^*$ the eigenfunctions of $L^*$ scaled to form a bi-orthogonal set paired with $y_j$, $\langle y_j,y_k^*\rangle =\delta_{jk}$, and the series converges in $L^2$. This turns out to be crucial for the validity of Theorem~\ref{theorem2}.

\begin{lemma} \label{claim_eigenfunctions_L}
In the asymptotic regime $j\to\infty$, the eigenfunctions of $L$ are such that
\begin{equation} \label{asymp_efu}
y_j(x)=c\left[\sin(jx)-\frac{\cos(jx)V_1(x)}{2j}+R_j(x)\right]  
\end{equation}
where $\|R_j\|_{\infty}=O(j^{-2})$ and $V_1$ is defined by
\be\label{v1}
    V_1(x)=\int_0^x V(s)\,\mathrm{d}s.
\ee 
If $\|V\|_{\infty}<\frac32,$ then
\[
     \left\{ \frac{y_j}{\|y_j\|_2}\right\}_{j=1}^\infty  
\]
is a Riesz basis for $L^2(0,\pi)$.
\end{lemma}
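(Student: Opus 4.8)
The plan is to derive the asymptotic formula \eqref{asymp_efu} via the standard WKB/Volterra-integral-equation analysis for Sturm–Liouville problems, and then deduce the Riesz basis property from the closeness of the perturbed eigenfunctions to the Fourier-sine basis \eqref{onb}. For the first part, I would fix $j$ large, write the eigenfunction $y_j$ as the solution of $-y'' + Vy = w_j^2 y$ with $y(0)=0$, normalised by $y'(0)=w_j$, and convert this to the Volterra integral equation
\[
   y_j(x) = \sin(w_j x) + \frac{1}{w_j}\int_0^x \sin\bigl(w_j(x-s)\bigr) V(s) y_j(s)\,\mathrm{d}s.
\]
Iterating once (Picard/Neumann series) and using $\|V\|_\infty<\infty$ gives $y_j(x) = \sin(w_jx) + O(1/j)$ uniformly, and substituting this back into the integral, together with the product-to-sum identity $\sin(w_j(x-s))\sin(w_js) = \tfrac12[\cos(w_j(x-2s)) - \cos(w_jx)]$, isolates the $1/(2j)$ term: the $\cos(w_jx)$ piece integrates to $-\frac{\cos(w_jx)}{2w_j}V_1(x)$ with $V_1$ as in \eqref{v1}, while the oscillatory $\cos(w_j(x-2s))$ piece is $O(j^{-2})$ after one integration by parts (here the extra regularity $V\in H^2$, or $V\in\mathrm{BV}$ in the real case, is what one uses, though $V\in W^{1,1}$ would already suffice for this term). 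Finally I would replace $w_j$ by $j$ everywhere using Lemma~\ref{eigenvalues_of_L}, since $|w_j-j| = O(j^{-1})$ and the functions $\sin,\cos$ are Lipschitz on bounded sets, absorbing the discrepancy into $R_j$; one checks $\|R_j\|_\infty = O(j^{-2})$.

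For the Riesz basis claim I would invoke a Bari-type stability theorem: an $\omega$-linearly independent sequence that is quadratically close to an orthonormal basis is a Riesz basis. Concretely, set $e_j = y_j/\|y_j\|_2$; from \eqref{asymp_efu} and $\|d_j\|_2 = 1$ one gets $\|e_j - d_j\|_2 = O(1/j)$ after a short computation (the leading corrections $\cos(jx)V_1(x)/(2j)$ and $R_j$ both have $L^2$-norm $O(1/j)$, and the normalising constants satisfy $\big|\,\|y_j\|_2/c - \sqrt{\pi/2}\,\big| = O(1/j)$). Hence $\sum_j \|e_j - d_j\|_2^2 < \infty$ provided only finitely many terms misbehave — but here all eigenvalues are simple by Lemma~\ref{eigenvalues_of_L} under $\|V\|_\infty<\tfrac32$, so for $j$ large the asymptotics apply to every $y_j$ and no term is exceptional. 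Completeness of $\{e_j\}$ in $L^2(0,\pi)$ follows because $L$ has compact resolvent and, by the holomorphic family of type (A) argument already used, its root vectors are complete; simplicity of the eigenvalues means root vectors coincide with eigenvectors. With completeness and quadratic closeness to an orthonormal basis in hand, the Bari–Markus theorem (see e.g. Gohberg–Krein) yields that $\{e_j\}$ is a Riesz basis.

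The main obstacle I anticipate is not the WKB expansion itself, which is classical, but the bookkeeping needed to obtain the \emph{uniform} bound $\|R_j\|_\infty = O(j^{-2})$: one must control the second Picard iterate, the integration-by-parts remainder (which is where $V \in H^2$ versus $V\in\mathrm{BV}$ enters and explains the different hypotheses in the real and complex cases), and the error from replacing $w_j$ by $j$, all simultaneously and with constants independent of $j$. A secondary subtlety is verifying $\omega$-linear independence (equivalently, minimality) of $\{e_j\}$ rather than merely completeness; this is where the biorthogonal system $\{y_j^*\}$ from $L^*$ is used, and one must check that $\langle y_j, y_k^*\rangle$ can be normalised to $\delta_{jk}$ with $\|y_j^*\|_2$ uniformly bounded, which again rests on the simplicity and the separation of the eigenvalues guaranteed by Lemma~\ref{eigenvalues_of_L}.
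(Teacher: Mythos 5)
Your overall architecture matches the paper's. The paper obtains the expansion $y_j(x)=\sin(w_jx)-\cos(w_jx)V_1(x)/(2w_j)+O(j^{-2})$ by citing Marchenko's Lemma~1.4.1 (whose proof is precisely the Volterra/Picard iteration you describe), then substitutes $j$ for $w_j$, and derives the Riesz basis property from completeness, minimality via the biorthogonal family $\{y_j^*\}$, and quadratic closeness to $\{d_j\}$, using Kato's stability theorem where you invoke Bari--Markus; these are interchangeable. So the plan is sound and essentially the paper's, with the asymptotics re-derived rather than quoted.

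There is, however, a genuine quantitative gap in your final substitution step. You replace $w_j$ by $j$ using only $|w_j-j|=O(j^{-1})$ together with the Lipschitz bound for $\sin$ and $\cos$; this yields $|\sin(w_jx)-\sin(jx)|=O(j^{-1})$ uniformly on $[0,\pi]$, which cannot be absorbed into a remainder of size $O(j^{-2})$. Indeed, with only that rate the formula \eqref{asymp_efu} as stated would acquire an extra term $\langle V\rangle\, x\cos(jx)/(2j)$ of the same order as the retained correction. What closes the argument is the standing normalisation $\langle V\rangle=0$ adopted in Section~\ref{section2}, under which Lemma~\ref{eigenvalues_of_L} gives the much sharper rate $|w_j-j|=O(j^{-3})$; the discrepancy from the substitution is then $O(j^{-3})$ and is legitimately folded into $R_j$. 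You need to invoke this sharper rate (or explicitly reduce to the mean-zero case) for the bookkeeping to produce $\|R_j\|_\infty=O(j^{-2})$. A secondary point: completeness of the eigenfunctions does not follow from compact resolvent alone for a non-self-adjoint operator, nor from the type~(A) holomorphy by itself; the paper appeals to the Sturm--Liouville completeness theorem (Marchenko, Theorem~1.3.1) combined with the simplicity of the eigenvalues, and you would need that result or a Keldysh-type argument at this step.
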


\begin{proof}
According to \cite[Lemma~1.4.1]{Marchenko1986}, the eigenfunction associated to $w_k$ is
\[
    y_j(x)=A^+\Phi_j^+(x)+A^-\Phi_j^-(x)
\]
where
\[
    \Phi_{j}^\pm(x)=e^{\pm iw_j x}\left(1\pm \frac{V_1(x)}{2i w_j}+\frac{V_2(x)}{(2iw_j)^2}+R_j^{\pm}(x)\right)
\]
for $\|R^{\pm}_j\|_{\infty} =O(w_j^{-3})$, $V_1$ given by \eqref{v1} and 
\begin{gather*}
\begin{aligned} V_2(x)&=\int_{0}^x LV_1(s)\,\mathrm{d}s \\ &= \int_0^xV(s)V_1(s)\,\mathrm{d}s-V(x) + V(0). \end{aligned}
\end{gather*}
Moreover, again from \cite[Lemma~1.4.1]{Marchenko1986}, we know that $R^{\pm}_{j}(0) = 0$. Thus, substituting the boundary conditions $y_j(0)=0$, we get $-A^{-} = A^{+}$. Set the latter equal to 1.
Hence,
\begin{gather*}
     y_j(x)=\sin(w_jx)-\frac{\cos(w_j x)V_1(x)}{2w_j}+\tilde{R}_j(x) \quad \text{where} \\ \quad \|\tilde{R}_j\|_{\infty}=O(j^{-2}).
\end{gather*}
Now,
\begin{align*}
    \sin\big(jx+O(j^{-3})x\big)&=\sin(jx)+s_j(x) \qquad \text{and}\\
   \cos\big(jx+O(j^{-3})x\big)&=\cos(jx)+c_j(x)
\end{align*}
where
\begin{align*}
    |s_j(x)|+|c_j(x)| &\leq 2\big|\cos(O(j^{-3})x)-1\big|+2\big|\sin(O(j^{-3})x)\big| \\ & \leq k_1j^{-6}+k_2j^{-3} 
\end{align*}
for all $x\in[0,\pi]$. This gives \eqref{asymp_efu}, by taking $R_j(x)=\tilde{R}_j(x)+s_j(x)+c_j(x)$. 

Let us now show that if $\|V\|_{\infty}<\frac32$, then the eigenfunctions form a Riesz basis. We aim at applying \cite[Theorem~2.20, p.265]{Kato1980}. According to \cite[Theorem~1.3.1]{Marchenko1986} combined with Lemma~\ref{eigenvalues_of_L}, the family of eigenfunctions $\{y_j\}$ is complete in $L^2(0,\pi)$. Since it has a dual pair, $\{y^*_j\}$, then it is minimal and so therefore exact \cite{heil2011}. Minimality ensures that $\{y_j\}$ is $\omega$-independent \cite{heil2011}. This gives two of the hypotheses of \cite[Theorem~2.20, p.265]{Kato1980}.

Now, by virtue of \eqref{asymp_efu} already proven, there exists a constant $c_4>0$ such that 
\[
\left\|\frac{y_j}{\|y_j\|_2}-d_j\right\|_2\leq \frac{k_3}{j}.\]
Thus,
\[
     \sum_{j=1}^\infty \left\|\frac{y_j}{\|y_j\|_2}-d_j\right\|_2^2<\infty.
\]
This is the other hypothesis required in \cite[Theorem~2.20, p.265]{Kato1980} and so indeed\footnote{The conclusion of \cite[Theorem~2.20, p.265]{Kato1980} does not exactly state that the family is a Riesz basis. But the proof  implies that the family is equivalent to an orthonormal basis, hence it is indeed always a Riesz basis.} $\left\{\frac{y_j}{\|y_j\|}\right\}_{j=1}^\infty$ is a Riesz basis of $L^2(0,\pi)$.
\end{proof}

We discuss the optimality of the condition $\|V\|_\infty<\frac32$ and the case where $V$ is real-valued in Section~\ref{optimality}.

From the above, we gather that the eigenvalues of $L$ are
\begin{equation} \label{true_hyp_eva}
        \lambda_j=w_j^2=j^2+\frac{k_j}{j^2} \qquad \text{for suitable }\{k_j\}\in \ell^{\infty}.
\end{equation}
Let
\[
     n_j(x)=\sqrt{\frac{2}{\pi}}\cos(j\pi x), \quad j\in\mathbb{N}
\]
be the non-constant orthonormal Fourier-cosine basis. 
Below we fix the eigenfunctions according to a normalisation of their bi-orthogonal pairs. Concretely, let   
\begin{equation} \label{true_hyp_eve}
      \phi_j(x)=\gamma_j y_j(x)=\gamma_jd_j(x)-\frac{\gamma_j n_j(x)V_1(x)}{2j}+\gamma_j R_j(x) 
\end{equation}
where $\|R_j\|_\infty\leq \frac{c}{j^2}$. Without further mention, from now on the non-zero constants $\gamma_j$ are chosen, such that the associated bi-orthogonal sequence $\{\phi^*_j\}$ is given by 
\begin{equation} \label{forelstar}
\phi_j^*(x)=d_j(x)-\frac{n_j(x)\overline{V}_1(x)}{2j}+R_j^*(x) .
\end{equation}
Then, there exist constants $0<\tilde{\gamma}_1<\tilde{\gamma}_2<\infty$ such that
\begin{equation}   \label{asympt_gammas}
    \frac{\tilde{\gamma}_1}{j}<|\gamma_j-1|<\frac{\tilde{\gamma}_2}{j} \qquad \text{and} \qquad
      \frac{\tilde{\gamma}_1}{j}<\big|\|\phi^*_j\|_2-1\big|<\frac{\tilde{\gamma}_2}{j},
\end{equation}
for all $j\in\mathbb{N}$.

\section{Proof of Theorem~\ref{theorem2}}
\label{section3}
We now state and prove a crucial lemma, from which Theorem~\ref{theorem2} follows as a corollary.
 
\begin{lemma} \label{claim3}
If the potential $V$ is such that $\langle V\rangle=0$ and $\|V\|_{\infty}<\frac32$, then the solution to the time-evolution equation \eqref{sv_bc} is given by
\begin{equation}  \label{weak_revival_sc}
 u(x,t)=w(x,t)+\sum_{j=1}^\infty \langle f,d_j\rangle e^{-ij^2t}d_j(x), 
\end{equation}
where for each fixed $t>0$, $w(\cdot,t)\in C([0,\pi])$.  
\end{lemma}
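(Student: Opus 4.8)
The plan is to prove Lemma~\ref{claim3} by subtracting the free evolution term by term from the eigenfunction expansion of $u$, and showing that the resulting series of differences converges absolutely in $C([0,\pi])$. Absolute uniform convergence then forces the limit to be continuous and to agree a.e.\ with the function $w(\cdot,t)$ defined implicitly by \eqref{weak_revival_sc}.

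First I would record the data. By the spectral decomposition stated just after Lemma~\ref{claim_eigenfunctions_L}, $u(x,t)=\sum_{j\geq1}\langle f,\phi_j^*\rangle\, e^{-i\lambda_j t}\phi_j(x)$, the series converging in $L^2(0,\pi)$, with $\lambda_j=j^2+k_j/j^2$ and $\{k_j\}\in\ell^{\infty}$ by \eqref{true_hyp_eva}. Write $a_j=\langle f,\phi_j^*\rangle$ and $b_j=\langle f,d_j\rangle$, so that $\{b_j\}\in\ell^2$. Using \eqref{forelstar} — together with Lemma~\ref{claim_eigenfunctions_L} applied to $L^*=-\partial_x^2+\overline V$, which yields $\|R_j^*\|_{\infty}=O(j^{-2})$ — one gets $a_j=b_j+\varepsilon_j$ with $\varepsilon_j=-\frac{\mu_j}{2j}+\nu_j$, where $\mu_j=\int_0^{\pi}f\,n_j\,V_1$ and $|\nu_j|\leq\|f\|_1\|R_j^*\|_{\infty}=O(j^{-2})$; since $fV_1\in L^2$, Bessel's inequality gives $\{\mu_j\}\in\ell^2$, and in particular $|\varepsilon_j|=O(j^{-1})$. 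From \eqref{true_hyp_eve} and \eqref{asympt_gammas}, $\phi_j=d_j+\delta_j$ with $\delta_j=(\gamma_j-1)d_j-\frac{\gamma_j}{2j}n_jV_1+\gamma_jR_j$, where $\{\gamma_j\}$ is bounded, $\|R_j\|_{\infty}=O(j^{-2})$ and $\|\delta_j\|_{\infty}=O(j^{-1})$. Finally $e^{-i\lambda_jt}=e^{-ij^2t}(1+\rho_j)$ with $|\rho_j|=|e^{-ik_jt/j^2}-1|\leq C_tj^{-2}$, even though $\lambda_j$ is complex, because $\{k_j\}$ is bounded so the imaginary parts of $\lambda_j$ are $O(j^{-2})$.

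Next I would expand, for each fixed $t>0$ and each $j$, the difference $a_je^{-i\lambda_jt}\phi_j-b_je^{-ij^2t}d_j$ using the three substitutions above. This produces a fixed finite list of cross terms, each carrying at least one of the small factors $\varepsilon_j,\delta_j,\rho_j$. Every term that carries two such factors, or a single one of the $O(j^{-2})$ quantities $\nu_j,R_j,\rho_j$, is $O(j^{-2})$ in the supremum norm — using that $V_1\in C([0,\pi])$ since $V\in H^2$, and the uniform boundedness of $\|d_j\|_{\infty}$, $\|n_j\|_{\infty}$, $\|\delta_j\|_{\infty}$ — hence summable. The only delicate contributions are the three of the form $\frac{c_j}{j}\,e^{-ij^2t}\times(\text{trigonometric factor})$ with $\{c_j\}\in\ell^2$: namely the one coming from $-\frac{\mu_j}{2j}$ inside $\varepsilon_jd_j$, and the two coming from $(\gamma_j-1)b_j$ and from $-\frac{\gamma_jb_j}{2j}$ inside $b_j\delta_j$. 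For these, the Cauchy--Schwarz bound $\sum_j\frac{|c_j|}{j}\leq\|\{c_j\}\|_{\ell^2}\big(\sum_jj^{-2}\big)^{1/2}<\infty$ gives absolute uniform convergence. Adding the finitely many families of estimates yields $\sum_j\big\|a_je^{-i\lambda_jt}\phi_j-b_je^{-ij^2t}d_j\big\|_{\infty}<\infty$, so $w(\cdot,t)$ has a continuous representative, which proves Lemma~\ref{claim3}.

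I expect the main obstacle to be exactly this bookkeeping: the crude estimates $|\varepsilon_j|=O(j^{-1})$ and $\|\phi_j\|_{\infty}=O(1)$ only give an $O(j^{-1})$ bound per term, which is \emph{not} summable, so one must peel off the leading $d_j$-component from every object and exploit that the genuinely rough coefficients, $\mu_j$ and $b_j$, are square-summable, so that dividing by $j$ puts them in $\ell^1$. Once Lemma~\ref{claim3} is in hand, Theorem~\ref{theorem2} follows by reinstating $\langle V\rangle$ through the gauge substitution $u\mapsto e^{-i\langle V\rangle t}u$ (which replaces $V$ by $V-\langle V\rangle$), and then evaluating $\sum_j\langle f,d_j\rangle e^{-ij^2t}d_j(x)$ at $t=2\pi\frac pq$ by the classical Gauss-sum identity for the free Dirichlet Schr\"odinger evolution, which rewrites it as the stated finite combination of translates of $f^{\mathrm{o}}$.
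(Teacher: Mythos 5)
Your proposal is correct and follows essentially the same route as the paper's proof: you use the same eigenpair asymptotics \eqref{true_hyp_eva}--\eqref{asympt_gammas}, peel off the leading $d_j$-components from the coefficient, the eigenfunction and the time factor, and control the genuinely $O(j^{-1})$ pieces by the Cauchy--Schwarz bound $\sum_j |c_j|/j<\infty$, exactly as in Steps 1--4 of the paper. The only difference is organisational --- you bound the term-by-term difference $a_je^{-i\lambda_jt}\phi_j-b_je^{-ij^2t}d_j$ in one absolutely convergent sup-norm series, whereas the paper splits sequentially into $U_1,U_2$ and then $u_3,\dots,u_8$ --- which changes nothing of substance.
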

\begin{proof}
We separate the proof into four steps.

\underline{Step 1}. Consider the $L^2$ expansion of the initial condition, 
\[
    f(x)=\sum_{j=1}^\infty c_j \phi_j(x), \quad c_{j} = \langle f, \phi^{*}_j\rangle.
\]
According to \eqref{true_hyp_eva},
\begin{align*}
     u(x,t)&=\sum_{j=1}^\infty c_j e^{-i\lambda_j t}\phi_j(x) 
     =\sum_{j=1}^\infty c_j e^{-i\left( j^2+\frac{k_j}{j^2} \right) t}\phi_j(x) \\
&=  \sum_{j=1}^\infty c_j e^{-i j^2t} \left( 1-\frac{i k_j}{j^2}\int_0^t e^{-\frac{ik_j}{j^2}s}\,\mathrm{d}s  \right) \phi_j(x) \\
&= U_1(x,t)-U_2(x,t)
\end{align*}
where
\[
      U_1(x,t)= \sum_{j=1}^\infty c_j e^{-ij^2t}\phi_j(x)
\]
and $U_2(x,t)$ has a similar expression but involving the integral above. We treat these two terms separately.

\underline{Step 2}. Let us show that $U_2\in C^1([0,\pi])$. Set
\[
      \zeta_j(x,t)=\frac{ic_j k_j}{j^2} e^{-ij^{2}t}\int_0^t e^{-\frac{ik_j}{j^2}s}\,\mathrm{d}s \ \phi_j(x).  
\]
Then,
\[
    |\zeta_j(x,t)|\leq \frac{\|\{k_j\}\|_{\infty}  \|\phi_j\|_{\infty}|\langle v,\phi_j^*\rangle|}{j^2} tB_j\leq \frac{\|\{k_j\}\|_{\infty} \|\phi_j\|_{\infty}\|v\|_2\|\phi_j^*\|_2}{j^2} tB_j
\]
where
\[
    B_j=\sup_{s\in [0,t]}\left| e^{-\frac{ik_j s}{j^2}} \right|
   \leq \sup_{s\in [0,t]} e^{\left|\frac{\operatorname{Im} k_j s}{j^2}\right|}\leq B<\infty.
\]
According to \eqref{true_hyp_eva}, the right hand side constant is independent of $j$. Here $t$ is fixed. Moreover, by virtue of \eqref{true_hyp_eve} and \eqref{asympt_gammas},
\[
    \max\big\{\|\phi^*_j\|_2,\|\phi_j\|_{\infty}\big\}\leq c
\]
for all $j\in \mathbb{N}$, where the constant $c>0$ is independent of $j$.
Hence, by Weierstrass' M-test, 
\[
    U_2(x,t)=\sum_{j=1}^\infty \zeta_j(x,t)
\]
converges absolutely and uniformly to a $C^1$ function, because each component is $C^1$. 

\underline{Step 3}. Consider now $U_1$. According to \eqref{true_hyp_eve},
\begin{align*}
    U_1(x,t) &= \sum_{j=1}^\infty c_j\gamma_j e^{-ij^2 t}d_j(x)-\sum_{j=1}^\infty \frac{c_j\gamma_j e^{-ij^2 t}}{2j}n_j(x)V_1(x) +\sum_{j=1}^\infty c_j\gamma_j e^{-ij^2 t} R_j(x) \\
         &= u_3(x,t)+u_4(x,t)+u_5(x,t).
\end{align*} 
In this step we show that $u_4$ and $u_5$ are continuous in the variable $x$.
From the asymptotic behaviour of $R_j$ and an identical argument as we used in step~2, we know that $u_5$ is $C^1$ in the variable $x$. 

Now, for $u_4(x,t)$, note that 
\[
    \sum_{j=1}^\infty |c_j|^2<\infty,
\]
because $\{\phi_j^*\}$ is a Riesz basis, e.g. \cite[Theorem~7.13]{heil2011}. Then,
by Cauchy-Schwarz, \[
     \sum_{j=1}^\infty \left|\frac{c_j}{j}\right|<\infty.
\]
Thus, since $V_1$ is a bounded function, for all fixed $t$, the sequence
\[
     \left\{\frac{c_j\gamma_j e^{-ij^2 t}\|V_1\|_{\infty}}{2j}\right\}_{j=1}^\infty\in \ell^1(\mathbb{N}).
\]
Hence, for all fixed  $t>0$, the family of sequences (family for $x\in[0,\pi]$) 
\[
      \left\{\frac{c_j\gamma_j e^{-ij^2 t}n_j(x)V_1(x)}{2j}\right\}_{j=1}^\infty\in \ell^1(\mathbb{N}).
\]
Therefore, by the Dominated Convergence Theorem (in $\ell^1$), we have that
\[
    \lim_{x\to x_0} u_4(x,t)=u_4(x_0,t),
\]
for all $x_0\in[0,\pi]$. That is, $u_4$ is a continuous function of the variable $x$. 

\underline{Step~4}. Finally, we consider $u_3(x,t)$. 
By \eqref{forelstar}, we know that
 \[
     c_j=\langle f,d_j\rangle -\frac{\langle f V_1,n_j\rangle}{2j}+\langle f,R_j^*\rangle.
\]
Then,
$
    u_3(x,t)=u_6(x,t)-u_7(x,t)+u_8(x,t),
$
where
\begin{gather*}
       u_6(x,t)=\sum_{j=1}^\infty  \langle f,d_j\rangle \gamma_{j} e^{-ij^2 t}d_j(x),
\qquad u_7(x,t)=\sum_{j=1}^\infty \frac{\langle f V_1,n_j\rangle}{2j} \gamma_j e^{-ij^2 t}d_j(x) \\ \text{and} \quad      u_8(x,t)=\sum_{j=1}^\infty \langle f,R_j^*\rangle \gamma_j e^{-ij^2 t}d_j(x).
\end{gather*}

We write $\gamma_{j} = 1+(\gamma_{j} - 1)$ and split each term of $u_{6}(x,t)$, $u_{7}(x,t)$, $u_{8}(x,t)$ into two sums. 

For $u_{6}(x,t)$ we have
\[
u_{6}(x,t) =\sum_{j=1}^\infty  \langle f,d_j\rangle e^{-ij^2 t}d_j(x) + \sum_{j=1}^\infty  \langle f,d_j\rangle (\gamma_{j} -1) e^{-ij^2 t}d_j(x)
\]
The first component of $u_{6}$ is the second component in the solution representation \eqref{weak_revival_sc}. To deal with the second one we use \eqref{asympt_gammas}. By Cauchy-Schwarz,
\[
\sum_{j=1}^{\infty} \frac{|\langle f, d_{j}\rangle|}{j} < \infty.
\]
Hence, by Weierstrass' M-test again, for each $t>0$, the series of functions
\[
\sum_{j=1}^\infty  \langle f,d_j\rangle (\gamma_{j} -1) e^{-ij^2 t}d_j(x)
\]
converges absolutely and uniformly to a $C^{1}$ function on $[0,\pi]$.
 
Now, the function $u_7$ is written as follows 

\[
u_{7}(x,t) = \sum_{j=1}^\infty \frac{\langle f V_1,n_j\rangle}{2j} e^{-ij^2 t}d_j(x) + \sum_{j=1}^\infty \frac{\langle f V_1,n_j\rangle}{2j} (\gamma_j - 1) e^{-ij^2 t}d_j(x).
\]

The first component of $u_{7}$ is continuous as a consequence of an argument similar to that employed for $u_4$. Indeed, since $\{n_j\}$ is an orthonormal basis and $f V_1\in L^2(0,\pi)$,
\[
   \sum_{j=1}^\infty \left|\frac{\langle f V_1,n_j\rangle e^{-ij^2 t}}{2j}\right|\leq \| f V_1\|_2\frac{\pi}{\sqrt{6}},
\]
so we can use the sequence 
\[
    \left\{\frac{\langle f V_1,n_j\rangle e^{ij^2 t}}{2j}\right\}_{j=1}^\infty\in \ell^1
\]
to ensure continuity, via the Dominated Convergence Theorem. The second component is a $C^{1}$ function since its Fourier-sine coefficients decay like $j^{-2}$, due to \eqref{asympt_gammas} and the Cauchy-Schwarz inequality. So, in total, $u_{7}(\cdot,t)\in C([0,\pi])$. 

Finally, for the function $u_8$ we have that
\[
u_8(x,t) = \sum_{j=1}^\infty \langle f,R_j^*\rangle e^{-ij^2 t}d_j(x) + \sum_{j=1}^\infty \langle f,R_j^*\rangle (\gamma_j-1) e^{-ij^2 t}d_j(x).
\]
Here, the first component is $C^1$ in $x$. Indeed,
since
\[
     |\langle f,R_j^*\rangle|=\left|\int_0^\pi f(x)\overline{R_j^*(x)}\,\mathrm{d}x\right| \leq \frac{\pi \|f\|_2 c}{j^2},
\]
the first component is a function whose sine-Fourier coefficients decay like $j^{-2}$, so it is continuously differentiable in $x$. The second component is twice differentiable in $x$, since due to \eqref{asympt_gammas} it represents a function whose Fourier-sine coefficients decay like $j^{-3}$. Thus, $u_{8}(\cdot,t)$ belongs to $C^{1}([0,\pi])$. 

Collecting the statements about $u_k$ from the previous steps, we conclude that the expression of $u$ is as claimed in \eqref{weak_revival_sc} where indeed $w$ is continuous in the variable $x$.
\end{proof}

\begin{remark}
In the proof of this lemma, note that all components of $w$ are $C^1$, except $u_4$ and $u_7$.  
\end{remark}

The proof of Theorem~\ref{theorem2} now follows from Lemma~\ref{claim3} and the combinatorial argument for $V=0$.

\begin{proof}[Proof of Theorem~\ref{theorem2}]
We show that, if $V=0$, then the solution to \eqref{sv_bc} at rational times $t_{\mathrm{r}}= 2\pi\frac{p}{q}$ is 
\begin{equation}\label{classicalrevivals}
     u(x,t_{\mathrm{r}})=\sum_{j=1}^\infty \langle f,d_j\rangle e^{-ij^2t_{\mathrm{r}}}d_j(x) =  
      \frac{1}{q}\sum_{k,m=0}^{q-1} e^{2\pi i (-m^2 \frac{p}{q} +m\frac{k}{q})} f^{\mathrm{o}}\Big(x-2\pi\frac{k}{q}\Big),
\end{equation}
where $f^{\mathrm{o}}$ denotes the odd, $2\pi$-periodic extension of $f$. Therefore, replacing $V$ with $V-\langle V \rangle$ if needed, and applying Lemma~\ref{claim3}, gives Theorem~\ref{theorem2}.

The proof of \eqref{classicalrevivals} is as follows. For $t\in\mathbb{R}$ we have that
\[
     u(x,t)=\frac{1}{2\pi} \sum_{j=-\infty}^\infty e^{-ij^2t} \langle f^{\mathrm{o}},e^{ij(\cdot)}\rangle e^{ijx}.
\] 
Then, for $t=t_{\mathrm{r}}$ take $j\underset{q}{\equiv} m$ so that $e^{ij^2t_{\mathrm{r}}}=e^{im^2t_{\mathrm{r}}}$. Thus,
\[
      u(x,t_{\mathrm{r}})=\frac{1}{2\pi} \sum_{m=0}^{q-1} e^{-im^2t_{\mathrm{r}}} \sum_{\substack{j\in \mathbb{Z} \\ j\underset{q}{\equiv} m}}\langle f^{\mathrm{o}},e^{ij(\cdot)}\rangle e^{ijx}.
\]
Let the summation on the right had side be denoted by $T$. Since
\[
   \sum_{k=0}^{q-1}e^{2\pi i(m-j)\frac{k}{q}}=\begin{cases} q & j\underset{q}{\equiv} m \\ 0 & j\underset{q}{\not\equiv} m,\end{cases} 
\]
we have 
\[ \begin{aligned}T&=\frac{1}{q}\sum_{k=0}^{q-1} e^{2\pi im\frac{k}{q}}\sum_{j\in\mathbb{Z}} e^{-2\pi i\frac{k}{q}j}\langle f^{\mathrm{o}},e^{ij(\cdot)}\rangle e^{ijx} \\ &  = \frac{1}{q}\sum_{k=0}^{q-1} e^{2\pi im\frac{k}{q}}\sum_{j\in\mathbb{Z}} \Big\langle f^{\mathrm{o}}\Big(\cdot-\frac{2\pi k}{q}\Big),e^{ij(\cdot)}\Big\rangle e^{ijx} \\ &= \frac{1}{q}\sum_{k=0}^{q-1} e^{2\pi im\frac{k}{q}}f^{\mathrm{o}}\Big(x-\frac{2\pi k}{q}\Big) .\end{aligned}\]
Hence, \eqref{classicalrevivals} holds true. 
\end{proof}

\begin{remark} \label{Duhamel}
For general bounded complex potential $V$, we know from the Dyson expansion that \eqref{weak_revival_sc} holds true for
\[
    w(x,t)=\sum_{k=1}^\infty w_k(x,t),
\] 
where $w_k(x,t)$ are explicitly given in terms of integrals of regular functions. These functions are continuous for $x\in (0,\pi)$. By tracking the convergence of the series, it might be possible to establish its continuity, therefore extend the results of this section to all $\|V\|_{\infty}<\infty$. See the ideas described in \cite[\S7]{Dimoudis22}.
\end{remark}


\section{The hypotheses of Theorem~\ref{theorem2}} \label{optimality}
\label{section4}
In this section we examine the optimality of the hypotheses of Theorem~\ref{theorem2}, in terms of the size and the regularity of the potential. 

Firstly, we note that for $L$ self-adjoint, the assumptions on $V$ can be relaxed. According to the asymptotic expansions reported in \cite[eqs. $(4.21)_{7}$, $(5.4)_{2}$ and $(5.9)_{2}$]{FultonPruess1994}, the identity \eqref{asymp_efu} is valid for any $V:[0,\pi]\longrightarrow \mathbb{R}$ of bounded variation irrespective of the size of $\|V\|_{\infty}<\infty$. Therefore, by following the same method of proof presented above, it directly follows that the conclusion of Theorem~\ref{theorem2} still holds true under these modified hypotheses on $V$. That is, the solution to \eqref{pbm1} at rational times, is given by
\begin{equation} \label{repeatedrevival} 
     u\Big(x,2\pi\frac{p}{q}\Big)=w\Big(x,2\pi\frac{p}{q}\Big)+\frac{1}{q} \ e^{-2\pi i\langle V\rangle \frac{p}{q}}
      \sum_{k,m=0}^{q-1} e^{ 2\pi i ( m\frac{k}{q}-m^2 \frac{p}{q})} f^{\mathrm{o}}\Big(x-2\pi\frac{k}{q}\Big)
\end{equation}
for a suitable function $w(\cdot,t)$ continuous in $x\in[0,\pi]$. 

In the more general non-self-adjoint setting, for $\|V\|_{\infty}>\frac{3}{2}$, we only know from the available asymptotic formulas that, for large wavenumbers, all the eigenvalues are simple and the corresponding eigenfunctions form a basis of a subspace $\mathcal{S}$ of finite co-dimension. Despite of this, still $iL$ is the generator of a one-parameter semigroup, see Remark~\ref{Duhamel} above. The solution to \eqref{pbm1} exists and it is unique for all $f\in L^2$. Moreover, \eqref{sv_bc} has a solution with an $L^2$-convergent eigenfunction expansion and a version of Theorem~\ref{theorem2} can be recovered for all $f\in \mathcal{S}$.

\medskip

We now present an example of a purely imaginary $V\in C^{\infty}$ with $\|V\|_\infty=2$, for which \eqref{repeatedrevival} appears to still be valid. For this purpose we choose for $V$ a purely imaginary Mathieu potential.  

Let $q\in\mathbb{C}$ and $V(x)=2q\cos(2x)$. Then $\langle V \rangle=0$. The eigenvalue equation associated to the operator $L$ is Mathieu's equation. The eigenvalues of $L$ are 
$
\omega_j^2=b_j(q),
$
the Mathieu characteristic values,
which satisfy
\[
b_j(q)=j^2+\frac{1}{2(j^2-1)}q^2+O(q^4)
\]
as $|q|\to 0$.
The corresponding eigenfunctions are the Mathieu functions
\[
    \phi_j(x)=\operatorname{se}_j(x,q)
\]  
for $j\in \mathbb{N}$. See \cite[\S 7.4]{Ince1956} and also \cite[28.2-7]{Olver10}. 

In Figure~\ref{fig1} we set $f(x)=\chi_{[\frac{3\pi}{8},\frac{5\pi}{8}]}(x)$ and show a numerical approximation to 100 modes of $u(x,t)$ at time $t=\frac{2\pi}{5}$ for purely imaginary $q$ with increasing modulus. As $|q|$ increases, we show how the correction $w(x,t)$ affects the revivals part of the solution. Note that the conclusions of Theorem~\ref{theorem2} only hold for $|q|<\frac{3}{4}$, but the numerical approximation in the figure suggests that this conclusion appears to be valid also when $\|V\|_{\infty}=2$ for this potential. 

The graphs shown in Figure~\ref{fig2} re-inforce the conjecture that the correction term $w$ is continuous beyond the threshold $\|V\|_\infty=\frac32$.
Indeed, in Figure~\ref{fig2} we show a 100 modes approximation of 
\[
u(x,t)-\sum_{j=1}^\infty \langle f,d_j\rangle e^{-ij^2t}d_j(x) 
\]
for the same data as in Figure~\ref{fig1}. For (a)-(b) we confirm the shape of $w(x,t)$. For (c)-(d), note that even when $q=\frac{3\pi}{4}i$ and $q=i$, the difference appears to still be continuous.

\section*{Acknowledgements}
We kindly thank David Smith for his valuable comments made during the preparation of this manuscript. The work of GF was funded by EPSRC through Heriot-Watt University support for Research Associate positions, under the Additional Funding Programme for the Mathematical Sciences. BP was partially supported by a Leverhulme Research Fellowship.

\bibliographystyle{amsplain}

\begin{figure}
(a) \includegraphics[width=.4\textwidth]{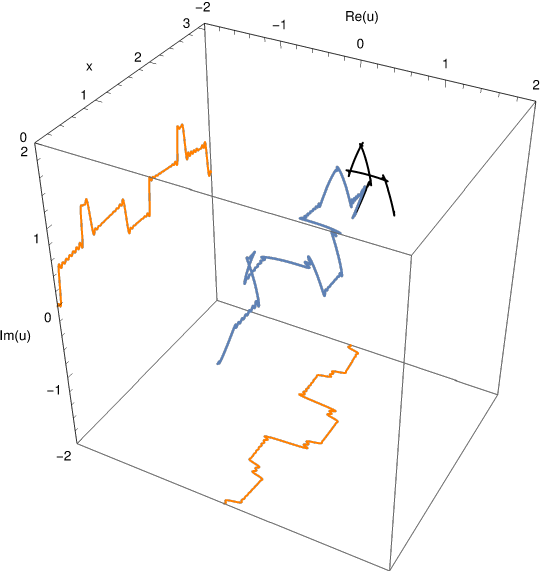} \ 
(b)\includegraphics[width=.4\textwidth]{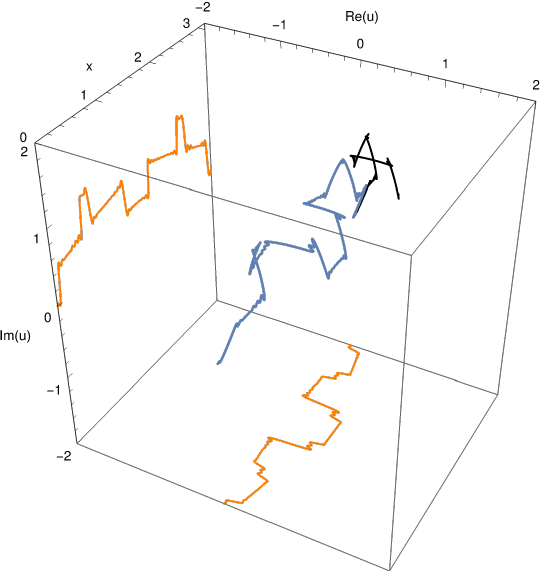}

(c) \includegraphics[width=.4\textwidth]{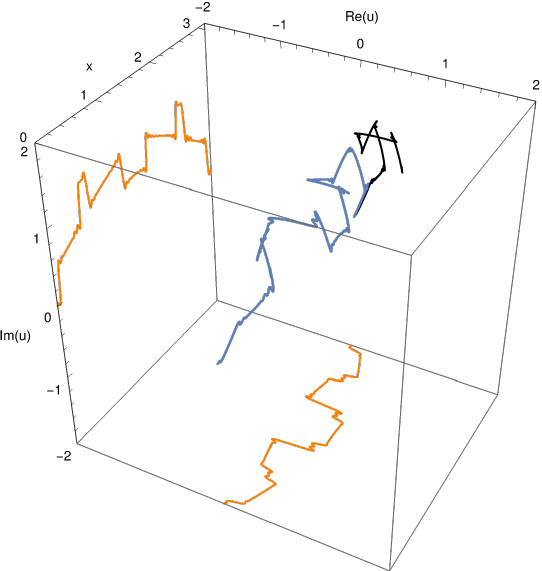} \ 
(d)\includegraphics[width=.4\textwidth]{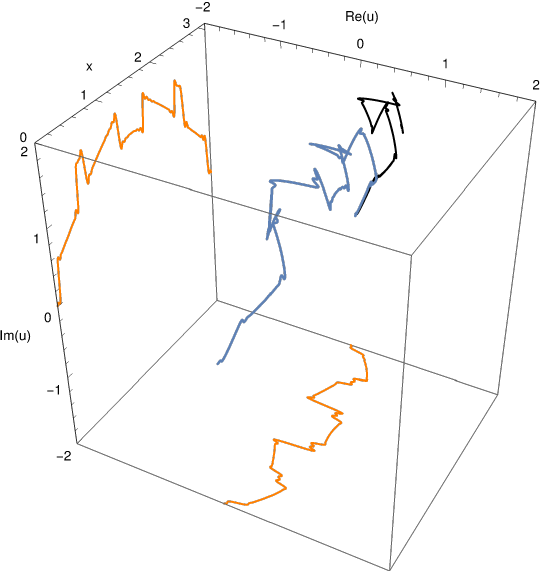}
\caption{Here $V(x)=2q\cos(2x)$ and $t=\frac{2\pi}{5}$. We show an approximation of $u(x,t)$ to 100 modes. The blue curves are the solutions as complex-valued functions of $x$, the orange curves correspond to projections of the real and imaginary parts of these solutions, and the black curves are the projections corresponding to the curves traced by the solutions in the complex plane for $x\in[0,\pi]$. The figures shown match (a)~$q=\frac{i}{4}$, (b)~$q=\frac{i}{2}$, (c)~$q=\frac{3i}{4}$ and (d)~$q=i$. \label{fig1}}
\end{figure}

\begin{figure}
(a) \includegraphics[width=.4\textwidth]{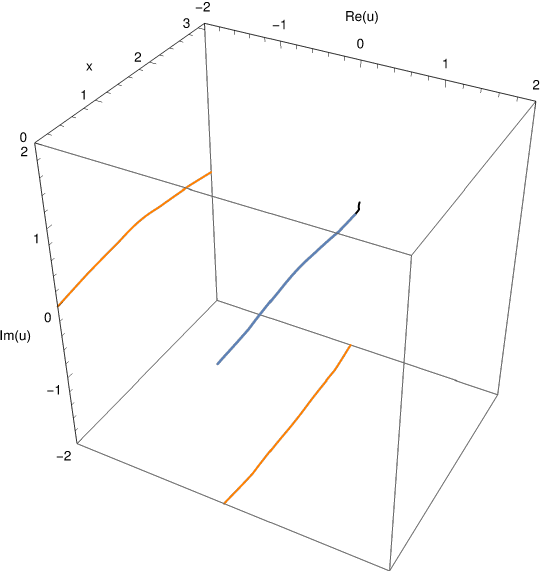} \ 
(b)\includegraphics[width=.4\textwidth]{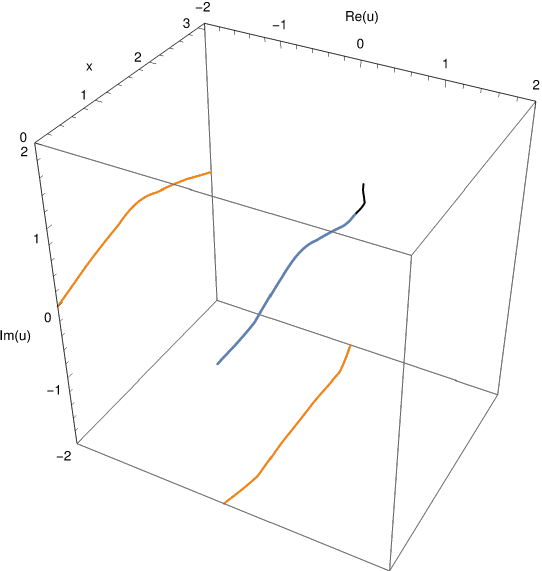}

(c) \includegraphics[width=.4\textwidth]{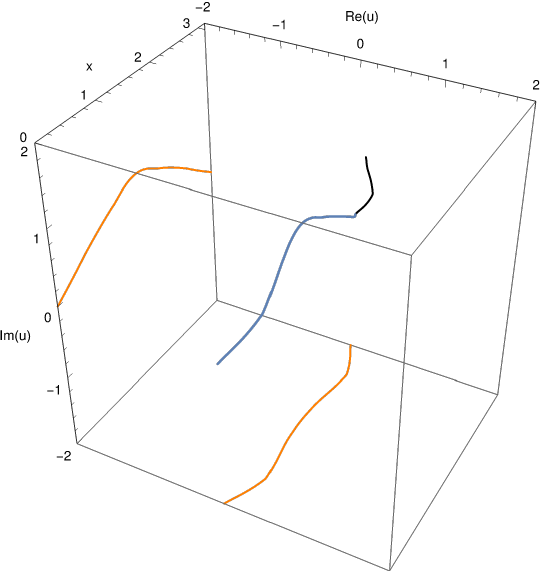} \ 
(d)\includegraphics[width=.4\textwidth]{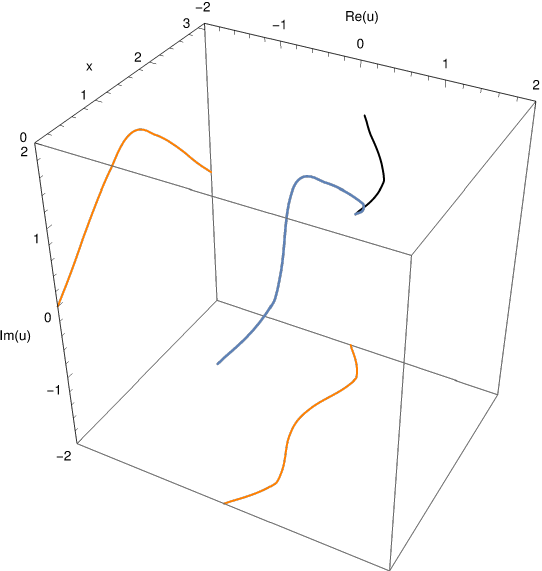}
\caption{Here $V(x)=2q\cos(2x)$ and $t=\frac{2\pi}{5}$. We show an approximation of $w(x,t)$ to 100 modes. The blue curves are the complex-valued functions of $x$, the orange curves correspond to projections of the real and imaginary parts of these, and the black curves correspond to the graphs traced by $w(x,\frac{2\pi}{5})$ on the complex plane for $x\in[0,\pi]$. The figures shown match  (a)~$q=\frac{i}{4}$, (b)~$q=\frac{i}{2}$, (c)~$q=\frac{3i}{4}$ and (d)~$q=i$. \label{fig2}}
\end{figure}

\end{document}